\newsavebox\mybox
\newenvironment{resizedtikzpicture}[1]{%
  \def\mywidth{#1}%
  \begin{lrbox}{\mybox}%
  \begin{tikzpicture}
}{%
  \end{tikzpicture}%
  \end{lrbox}%
  \resizebox{\mywidth}{!}{\usebox\mybox}%
}
\newcommand{\bi}{\begin{itemize}}
\newcommand{\ei}{\end{itemize}}
\newcommand{\vo}[1]{\textcolor[HTML]{000000}{\boldsymbol{#1}}}
\newcommand{\x}{\vo{x}}
\newcommand{\y}{\vo{y}}
\newcommand{\K}{\vo{K}}
\newcommand{\Q}{\vo{Q}}
\newcommand{\R}{\vo{R}}
\renewcommand{\H}{\vo{H}}
\newcommand{\I}{\vo{I}}
\newcommand{\A}{\vo{A}}
\newcommand{\n}{\vo{n}}
\newcommand{\w}{\vo{w}}
\newcommand{\W}{\vo{W}}
\newcommand{\F}{\vo{F}}
\newcommand{\real}{\mathbb{R}}
\newcommand{\Exp}[1]{\mathbb{E}\left[#1\right]}
\newcommand{\Var}[1]{\Exp{#1{#1}^T}}
\newcommand{\mup}{\vo{\mu}^{-}}
\newcommand{\Sig}{\vo{\Sigma}}
\newcommand{\mupp}{\vo{\mu}^{+}}
\newcommand{\trace}[1]{\mathbf{tr}\left(#1\right)}
\newtheorem{theorem}{Theorem}
\theoremstyle{remark}
\newcommand{\eqnlabel}[1]{\label{eqn:#1}}
   \newcounter{remarks}
\newenvironment{remarks}[1][]{\refstepcounter{remarks}\par\medskip
   \textbf{Remark~\theremarks. #1} \rmfamily}{\medskip}
\begin{document}

\title{Utility and Privacy in Object Tracking from Video Stream using Kalman Filter\\
}

\author{\IEEEauthorblockN{Niladri Das \& Raktim Bhattacharya}
\IEEEauthorblockA{\textit{Dept. of Aerospace Engineering} \\
\textit{Texas A\&M University}\\
College Station,TX, USA \\
niladridas@tamu.edu}
}

\maketitle

\begin{abstract}
Tracking objects in Computer Vision is a hard problem. Privacy and utility concerns adds an extra layer of complexity over this problem. In this work we consider the problem of maintaining privacy and utility while tracking an object in a video stream using Kalman filtering. Our first proposed method ensures that the localization accuracy of this object will not improve beyond a certain level. Our second method ensures that the localization accuracy of the same object will always remain under a certain threshold.
\end{abstract}

\begin{IEEEkeywords}
Kalman Filter, Privacy, Utility, LMI
\end{IEEEkeywords}

\section{Introduction}
We capture and share videos for a variety of purposes. These visual data has different private information \cite{Acquisti_2006}. The private information includes identity card, license plate number and finger-print. Another class of visual data, which is the focus of our paper, are the video streams of an object. We can use filtering algorithms (e.g. Kalman filter \cite{kalman}) to track with considerable precision. The object in motion is first detected by an image processing algorithm from the video frames. The accuracy depends on the algorithm, along with the resolution of the image frames. Higher resolution of the camera and higher accuracy of the detection algorithm in the \textit{pixel coordinate} improves localization of the tracked object in \textit{spatial coordinate}.

We address two important questions pertaining to tracking object using Kalman filter from a video stream. The first question is from a utility viewpoint. We define utility as the quality of the estimation accuracy. If we are putting together an image acquisition and detection system to track the object shown in Fig.~\ref{fig:redball} using Kalman filter  \cite{A__2010}, we can ask: what is the most economical setup that ensures the estimated localization error to be always below a prescribed threshold or with a utility greater than a prescribed threshold? 

The second question is about privacy. When an object is being tracked in a video stream, its privacy is proportional to the uncertainty in the estimate of its location. The notion of privacy is relevant when such videos are being accessed by a third party. The owner of this data might want to perturb the video such that a Kalman filter based estimation on it will keep the localization error above a prescribed value. Akin to the utility scenario one might ask: what is the optimal noise that we can add to the video which ensures that the estimated localization error is always above a prescribed threshold?

We are not aware of any prior works related to privacy and utility in object tracking using filtering from a video stream. Most of the works have focused on preserving privacy and/or utility of static images. In \cite{Orekondy_2018} the authors proposed a redaction by segmentation technique to ensure privacy of its contents. They showed that using their redaction method they can ensure near-perfect privacy while maintaining image utility. The authors in \cite{Boyle_2000} the authors studied the impact of filters that blur and pixelize at different levels on the privacy and utility of various elements in a video frame. In \cite{Winkler_2011} the authors presented a concept for user-centric privacy awareness in video surveillance. Other related works include \cite{Qureshi_2009},\cite{Brassil_2009},\cite{Saho_2018}, and \cite{Kim_2015}.

\section{Problem Formulation}
\begin{figure}
\centerline{\includegraphics[width=0.5\textwidth]{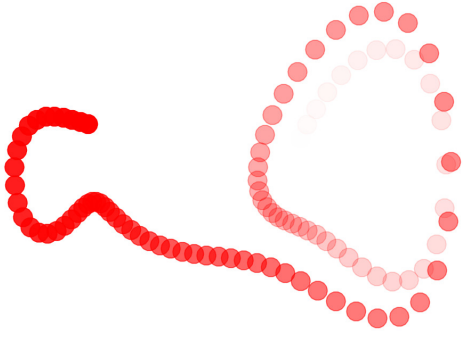}}
\caption{Time evolution of an object with darker shades representing more recent location.}
\label{fig:redball}
\end{figure}
We model the object detection process from a video frame using a linear discrete time stochastic systems $\bar{\mathcal{S}}$ described by the model of the form
\begin{subequations}\label{eq:LTI}
\begin{equation}
\x_{k+1} = \F\x_{k} + \w_{k},  \label{processDynamics}\\
\end{equation}
\begin{equation}
\y_k = \H\x_k + \n_k, \label{sensing}
\end{equation}
\end{subequations}
where $k=0,1,2,...$ represents the frame index, $\x_k\in\real^{n_{x}}$ is the $n_x$ dimensional true state of the \textit{model} in frame $k$, $\w_k \in\real^{n_x}$ is the  $n_x$ dimensional zero-mean Gaussian additive process noise variable with \sloppy $\mathbb{E}[\w_k\w_l^T] = \Q\delta_{kl}$. The $n_y$ dimensional observations in frame $k$ is denoted by $\y_k\in\real^{n_y}$ which is corrupted by an $n_y$ dimensional additive noise  $\n_k \in\real^{n_y}$. The sensor noise at each time instant is a zero mean Gaussian noise variable with $\mathbb{E}[\n_k\n_l^T] = \vo{R}\delta_{kl}$.
The initial conditions are $\Exp{\x_0}=\boldsymbol{\mu}_{0}$ and  $\Var{\x_0} = \Sig_0$. The process noise $\w_{k}$, observation noise $\n_k$, and initial state variable $\x_0$ are assumed to be independent.

The optimal state estimator for the stochastic system $\bar{\mathcal{S}}$ is the Kalman filter, defined by
\begin{align}
\K_k &= \Sig_k^-\H^T\Big[\H\Sig_k^-\H^T+\vo{R}\Big]^{-1} \tag*{(Kalman Gain)},\\
\mup_{k} & = \F\mupp_{k-1} \tag*{(Mean Propagation)},\\
\Sig_k^- &= \F\Sig^{+}_{k-1}\F^T+\Q \tag*{(Covariance Propagation)},\\
\mupp_{k}  &=\F\mupp_{k-1} + \K_k(\y_k-\H\mup_{k})\tag*{(Mean Update)},\\
\Sig^{+}_{k}&= (\I_{n_x}-\K_k\H)\Sig_k^-\tag*{(Covariance Update)},\\
\end{align}
where $\Sig_k^-,\Sig_k^+ \in \real^{n_x\times n_x}$ are the prior and posterior covariance matrix of the error estimate for frame $k$ respectively. The variables $\mup_{k},\mupp_{k}\in \real^{n_x}$, denote the prior and posterior mean estimate of the true state $\x_k$. The variable  $\boldsymbol{K}_k$ is the Kalman gain in frame $k$.  The parameter $\vo{R}$ is our design variable both for the case of utility and privacy, only varying in its interpretation. Now we define utility and privacy in the context of tracking a moving object.
\paragraph{Utility} Utility of the object detection system is specified by an upper bound on the steady-state estimation error due to filtering. We calculate a feasible $\vo{R}$ that ensures the steady state prior covariance matrix to be upper-bounded by a prescribed positive definite matrix $\vo{\Sigma}^d_{\infty}$ for the detection system modeled in eqn.~\ref{eq:LTI}. The parameter $\vo{R}$ is a measure of maximum inaccuracies allowed in the detection system.
\paragraph{Privacy} Privacy requirement is centered around a particular frame (say ${k+1}^{\text{th}}$).  It is specified by a lower bound on the estimation error $\vo{\Sigma}_{k+1}^{+}$ after the Kalman update, for that particular frame. This is where the privacy scenario differs from the utility case, where we focus on the steady-state error. We are interested in calculating a feasible $\vo{R}$ such that the posterior error covariance matrix  $\vo{\Sigma}_{k+1}^{+}$ is lower-bounded by a prescribed positive definite matrix $\vo{\Sigma}^{d}_{k+1}$. The parameter $\vo{R}$ is a measure of minimal noise that needs to be artificially injected to the ${k+1}^{\text{th}}$ image frame to ensure privacy with respect to accurate localization.

In the following sections we present two theorems that demonstrates how the utility and privacy preserving design parameter  $\vo{R}$ can be modeled as a solution to two convex optimization problems involving linear matrix inequalities (LMI).
\section{Optimal $\vo{R}$ for utility}
\begin{theorem}\label{thm:2}
Given $\vo{\Sigma}^d_\infty$, the desired steady-state error variance, the optimal algorithmic  precision $\vo{\Upsilon}:=\vo{R}^{-1}$ that satisfies $ \vo{\Sigma}_\infty \preceq \vo{\Sigma}^d_\infty$ is given by the following optimization problem,

\begin{equation}\left.
\begin{aligned}
& \min_{\vo{\Upsilon}}{ \trace{\vo{W}\vo{\Upsilon}\vo{W}^T}} \text{ subject to }\\
&\begin{bmatrix}
\vo{M}_{11} & \F\vo{\Sigma}^d_{\infty}\H^T \\
\H\vo{\Sigma}^d_{\infty}\F^T & \vo{L}+\vo{L}\vo{\Upsilon}\vo{L}
\end{bmatrix} \succeq 0,
\end{aligned}
\right\}
\label{eqn:thm2}
\end{equation}
where 
\begin{align*}
\vo{\Upsilon} &\succeq 0\\
\vo{L} &:=\H\vo{\Sigma}^d_{\infty}\H^T, \text{ and}\\
\vo{M}_{11} &:= \vo{\Sigma}^d_{\infty} - \F\vo{\Sigma}^d_{\infty}\F^T  - \Q \\
& + \F\vo{\Sigma}^d_{\infty}\H^T\vo{L}^{-1}\H\vo{\Sigma}^d_{\infty}\F^T, 
\end{align*}
with $\vo{\Upsilon} \in \real^{n_{y}\times n_{y}}$. The variable $\vo{W}\in\real^{n_{y}\times n_{y}}$, is user defined and serves as a normalizing weight on $\vo{\Upsilon}$.
\end{theorem}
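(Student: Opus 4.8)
The plan is to turn the Loewner inequality $\vo{\Sigma}_\infty \preceq \vo{\Sigma}^d_\infty$ into a discrete Riccati inequality evaluated at the \emph{target} covariance $\vo{\Sigma}^d_\infty$, and then to rewrite that Riccati inequality as the $2\times 2$ block LMI in \eqn{thm2} by a Sherman--Morrison--Woodbury identity followed by a Schur complement. The role of the parametrization $\vo{\Upsilon}:=\R^{-1}$ is precisely to make the resulting inequality affine in the decision variable, so that the feasibility problem becomes a semidefinite program.

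First I would recall that, under the usual detectability of $(\F,\H)$ and stabilizability of $(\F,\Q^{1/2})$, eliminating $\K_k$ and $\vo{\Sigma}^+_k$ from the filter recursions and letting $k\to\infty$ shows that the steady-state prior covariance $\vo{\Sigma}_\infty$ is the unique stabilizing solution of $\vo{\Sigma}_\infty = \sR(\vo{\Sigma}_\infty)$, where $\sR(\X) := \F\X\F^T + \Q - \F\X\H^T(\H\X\H^T+\R)^{-1}\H\X\F^T$. The structural fact I need is that $\sR(\cdot)$ is monotone on the positive semidefinite cone — this follows from the representation $\sR(\X) = \Q + \min_{\K}\big[\F(\I-\K\H)\X(\I-\K\H)^T\F^T + \F\K\R\K^T\F^T\big]$, a minimum of maps that are individually monotone in $\X$. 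Consequently, if $\vo{\Sigma}^d_\infty\succ 0$ satisfies $\sR(\vo{\Sigma}^d_\infty)\preceq\vo{\Sigma}^d_\infty$, then iterating $\sR$ from $\vo{\Sigma}^d_\infty$ produces a nonincreasing, bounded-below sequence $\vo{\Sigma}^d_\infty\succeq\sR(\vo{\Sigma}^d_\infty)\succeq\sR^2(\vo{\Sigma}^d_\infty)\succeq\cdots$, whose limit is a fixed point $\preceq\vo{\Sigma}^d_\infty$; by uniqueness of the stabilizing solution this limit is $\vo{\Sigma}_\infty$, so $\vo{\Sigma}_\infty\preceq\vo{\Sigma}^d_\infty$. (The reverse implication, needed to call the optimizer genuinely optimal rather than merely feasible, uses that along the ray above $\vo{\Sigma}_\infty$ the map $\sR$ stays below the identity, as in the scalar case.) Thus it suffices to impose
\[
\vo{\Sigma}^d_\infty - \F\vo{\Sigma}^d_\infty\F^T - \Q + \F\vo{\Sigma}^d_\infty\H^T\big(\vo{L}+\R\big)^{-1}\H\vo{\Sigma}^d_\infty\F^T \succeq 0, \qquad \vo{L} := \H\vo{\Sigma}^d_\infty\H^T .
\]

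Next I would substitute $\R=\vo{\Upsilon}^{-1}$ and apply the identity $\big(\vo{L}+\R\big)^{-1} = \vo{L}^{-1} - \big(\vo{L}+\vo{L}\vo{\Upsilon}\vo{L}\big)^{-1}$, which is immediate from Sherman--Morrison--Woodbury (both sides equal $(\I+\vo{\Upsilon}\vo{L})^{-1}\vo{\Upsilon}$); here every inverse is legitimate because $\R\succ 0$ and $\vo{L}\succ 0$, the latter holding when $\H$ has full row rank and $\vo{\Sigma}^d_\infty\succ 0$. With this substitution the Riccati inequality reads $\vo{M}_{11} - \F\vo{\Sigma}^d_\infty\H^T\big(\vo{L}+\vo{L}\vo{\Upsilon}\vo{L}\big)^{-1}\H\vo{\Sigma}^d_\infty\F^T \succeq 0$, with $\vo{M}_{11}$ exactly the block named in the statement. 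Since $\vo{L}+\vo{L}\vo{\Upsilon}\vo{L} = \vo{L}^{1/2}\big(\I+\vo{L}^{1/2}\vo{\Upsilon}\vo{L}^{1/2}\big)\vo{L}^{1/2}\succ 0$ for every $\vo{\Upsilon}\succeq 0$, the Schur complement lemma converts this into precisely the $2\times 2$ block LMI of \eqn{thm2}; adjoining $\vo{\Upsilon}\succeq 0$ (the closure, in the $\vo{\Upsilon}$ coordinates, of the constraint $\R\succ 0$) and the user-chosen convex weight $\trace{\vo{W}\vo{\Upsilon}\vo{W}^T}$ yields the stated program, whose feasible set and objective are both convex in $\vo{\Upsilon}$.

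The step I expect to be the main obstacle is the monotone fixed-point argument that ties the Riccati \emph{inequality} at $\vo{\Sigma}^d_\infty$ to the true steady-state covariance $\vo{\Sigma}_\infty$: it requires the right detectability/stabilizability hypotheses for existence and uniqueness of the stabilizing DARE solution and for convergence of the Riccati iteration from an arbitrary positive semidefinite start, plus a comparison estimate to rule out the iteration settling at a fixed point other than $\vo{\Sigma}_\infty$ (and, for the necessity direction, the analogue of the scalar ``$\sR(x)<x$ above the fixed point'' behaviour). Everything downstream — the Woodbury rewriting and the Schur complement — is routine once the invertibility conditions $\R\succ 0$ and $\vo{L}=\H\vo{\Sigma}^d_\infty\H^T\succ 0$ are recorded.
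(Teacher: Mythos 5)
Your proposal is correct, and its algebraic core --- writing the constraint as a Riccati inequality evaluated at $\vo{\Sigma}^d_\infty$, substituting $\R=\vo{\Upsilon}^{-1}$, applying the Woodbury identity $(\vo{L}+\vo{\Upsilon}^{-1})^{-1}=\vo{L}^{-1}-(\vo{L}+\vo{L}\vo{\Upsilon}\vo{L})^{-1}$, and then Schur-complementing against the block $\vo{L}+\vo{L}\vo{\Upsilon}\vo{L}\succ 0$ --- is exactly the route the paper takes. Where you genuinely differ is in how the Riccati inequality is tied to the claimed bound $\vo{\Sigma}_\infty\preceq\vo{\Sigma}^d_\infty$. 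The paper argues in the opposite direction: it \emph{assumes} $\vo{\Sigma}^d_\infty$ is itself the exact DARE solution for some precision $\vo{\Upsilon}^d\succeq 0$ and observes that replacing $\vo{\Upsilon}^d$ by any $\vo{\Upsilon}\succeq\vo{\Upsilon}^d$ relaxes the equality to the inequality \eqn{Ricc-relax}; the implication that the resulting steady-state covariance then sits below $\vo{\Sigma}^d_\infty$ is delegated, implicitly, to the standard Riccati comparison theorem, and the detectability/stabilizability hypotheses appear only in a remark after the proof. You instead prove the needed sufficiency directly: the one-step inequality $\sR(\vo{\Sigma}^d_\infty)\preceq\vo{\Sigma}^d_\infty$, together with monotonicity of $\sR$ obtained from its $\min_{\K}$ representation, yields a nonincreasing Riccati iteration started at $\vo{\Sigma}^d_\infty$ whose limit is the stabilizing fixed point $\vo{\Sigma}_\infty$. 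This buys a proof that does not presuppose $\vo{\Sigma}^d_\infty$ is attainable as an exact steady-state covariance for some sensor precision, and it surfaces the hypotheses (detectability of $(\F,\H)$, stabilizability of $(\F,\Q^{1/2})$, $\vo{L}=\H\vo{\Sigma}^d_\infty\H^T\succ 0$, and convergence of the Riccati iteration from an arbitrary positive semidefinite start to the unique stabilizing solution) that the paper leaves tacit; these are standard results, so I see no gap, only a sharper accounting of what must be assumed. Your parenthetical caveats --- that genuine optimality of the minimizer, as opposed to mere feasibility, needs the converse comparison estimate, and that $\vo{\Upsilon}\succeq 0$ is only the closure of the physically meaningful set $\R\succ 0$ --- are points the paper glosses over as well.
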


\begin{proof}
 The steady state prior covariance is the solution to the following discrete-time algebraic Riccati equation (DARE)
\begin{multline}
\vo{\Sigma}_{\infty} = \F\vo{\Sigma}_{\infty}\F^T+ \Q \\ - \F\vo{\Sigma}_{\infty}\H^T\left(\H\vo{\Sigma}_{\infty}\H^T+{\vo{\Upsilon}^{-1}}\right)^{-1} \H\vo{\Sigma}_{\infty}\F^T, \label{ARE}
\end{multline}
where $\vo{\Upsilon}:=\vo{R}^{-1}$.
We assume that $\vo{\Sigma}^d_\infty$ is the solution of eqn.~\ref{ARE} for some $\vo{\Upsilon}^d \succeq 0$, i.e. for detection precision $\vo{\Upsilon}^d$ the steady-state variance is $\vo{\Sigma}^d_\infty$. 
We use $\vo{A}\succeq \vo{B}$ to denote that $\vo{A}-\vo{B}$ is a positive semi-definite matrix.
For any $\vo{\Upsilon}^d \preceq \vo{\Upsilon}$, eqn.~\ref{ARE} becomes the following inequality 
\begin{multline}
\vo{\Sigma}^d_{\infty} - \F\vo{\Sigma}^d_{\infty}\F^T  - \Q+  \\
 \F\vo{\Sigma}^d_{\infty}\H^T\left(\H\vo{\Sigma}^d_{\infty}\H^T+\vo{\Upsilon}^{-1}\right)^{-1} \H\vo{\Sigma}^d_{\infty}\F^T \succeq 0. 
 \eqnlabel{Ricc-relax}
 \end{multline}
Expanding $\left(\H\vo{\Sigma}^d_{\infty}\H^T+\vo{\Upsilon}^{-1}\right)^{-1}$ using matrix-inversion lemma, the above inequality becomes
\begin{multline}
\vo{\Sigma}^d_{\infty} - \F\vo{\Sigma}^d_{\infty}\F^T  - \Q 
 + \F\vo{\Sigma}^d_{\infty}\H^T\vo{L}^{-1}\H\vo{\Sigma}^d_{\infty}\F^T \\ - \F\vo{\Sigma}^d_{\infty}\H^T\left(\vo{L}+\vo{L}\vo{\Upsilon}\vo{L}\right)^{-1}\H\vo{\Sigma}^d_{\infty}\F^T \succeq 0, \eqnlabel{ricc-relax-final}
 \end{multline}
 where $\vo{L}:=\H\vo{\Sigma}^d_{\infty}\H^T$. Using Schur complement we get the following LMI
\begin{align}
\begin{bmatrix}
\vo{M}_{11} & \F\vo{\Sigma}^d_{\infty}\H^T \\
\H\vo{\Sigma}^d_{\infty}\F^T & \vo{L}+\vo{L}\vo{\Upsilon}\vo{L}
\end{bmatrix} \succeq 0, \eqnlabel{ss:LMI}
\end{align} 
where 
\begin{multline*}
\vo{M}_{11} := \vo{\Sigma}^d_{\infty} - \F\vo{\Sigma}^d_{\infty}\F^T  - \Q \\
 + \F\vo{\Sigma}^d_{\infty}\H^T\vo{L}^{-1}\H\vo{\Sigma}^d_{\infty}\F^T. 
\end{multline*}
The Optimal $\vo{\Upsilon}^\ast$ is achieved by minimizing $\trace{\vo{W}\vo{\Upsilon}\vo{W}^T}$.
\end{proof}
\begin{remarks}
We assume complete detectability of ($\F,\H$) and stabilizability of ($\F,\Q^{1/2}$) \cite{anderson1979optimal} for eqn.~\ref{eq:LTI}. This ensure existence and uniqueness of the steady state prior covariance matrix $\vo{\Sigma_{\infty}}$ (for a fixed $\R$) for the corresponding DARE in eqn.~\ref{ARE}. The linear matrix inequality (LMI) in eqn.~\ref{eqn:thm2} gives the feasible set of $\vo{R}:=\vo{\Upsilon}^{-1}$. We introduced the convex cost function $\trace{\vo{W}\vo{\Upsilon}\vo{W}^T}$ to calculate the most economical choice of $\vo{R}$.   
\end{remarks}
\subsection{Theoretical Bound on Utility}
The minimal steady-state covariance of the estimate that \textit{any object detection} setup can achieve modeled as in eqn.~\ref{eq:LTI}, is the solution to the following DARE
\begin{multline}
\vo{\Sigma}_{\infty} = \F\vo{\Sigma}_{\infty}\F^T+ \Q \\ - \F\vo{\Sigma}_{\infty}\H^T\left(\H\vo{\Sigma}_{\infty}\H^T\right)^{-1} \H\vo{\Sigma}_{\infty}\F^T .\label{eqn:noRDARE}
\end{multline}
This provides a theoretical lower bound on the prescribed $\vo{\Sigma}^d_{\infty}$ that we can achieve. A positive unique solution to $\vo{\Sigma}_{\infty}$ in eqn.~\ref{eqn:noRDARE} exists if $(\F,\H)$ pair is detectable, $(\F,\Q^{1/2})$ pair is stabilizable, and $\H\vo{\Sigma}_{\infty}\H^T$ is full-rank.

\section{Optimal $\vo{R}$ for privacy}
\begin{theorem}\label{thm:3}
Given $\vo{\Sigma}^d_{k+1}$, the desired predicted error variance at time $k+1$, the optimal measurement noise $\vo{R}_{\text{p}}$ that satisfies $ \vo{\Sigma}_{k+1}^{-} \succeq \vo{\Sigma}^d_{k+1}$ for a known $\vo{\Sigma}_{k}^{-}$, is given by the following optimization problem,

\begin{equation}\left.
\begin{aligned}
& \min_{\vo{R}_{\text{p}}}{ \trace{\vo{W}\vo{R}_{\text{p}}\vo{W}^T}} \text{ subject to }\\
&\begin{bmatrix}
\vo{M}_{11} & \vo{L} \\
\vo{L}^{T} & \vo{L}_2+\vo{R}_{\text{p}}
\end{bmatrix} \succeq 0,
\end{aligned}
\right\}
\label{eqn:thm3}
\end{equation}
where 
\begin{align*}
\vo{R}_{\text{p}} &\succeq 0\\
\vo{L}_1 &:=\F\vo{\Sigma}^{-}_{k}\H^T, \ \vo{L}_2 := \vo{H}\vo{\Sigma}^{-}_{k}\vo{H}^T+\vo{R}_{\text{s}}\text{ and}\\
\vo{M}_{11} &:= -\vo{\Sigma}^d_{k+1} + \F\vo{\Sigma}^{-}_{k}\F^T  +\Q,
\end{align*}
with $\vo{R}_{\text{p}} \in \real^{n_{y}\times n_{y}}$. The variable $\vo{W}\in\real^{n_{y}\times n_{y}}$, is user defined and serves as a normalizing weight on $\vo{R}_{\text{p}}$.
\end{theorem}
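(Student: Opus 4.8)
The plan is to mirror the proof of Theorem~\ref{thm:2}: push the one-step Kalman covariance recursion into a single Riccati-type matrix inequality in the design variable and then linearize it with a Schur complement. The privacy case is actually the easier of the two, because the injected noise $\vo{R}_{\text{p}}$ enters the recursion \emph{additively} (through $\H\vo{\Sigma}_k^-\H^T+\vo{R}_{\text{s}}+\vo{R}_{\text{p}}$) rather than through an inverse, so neither the matrix-inversion lemma nor a steady-state/monotonicity argument is needed — everything is a single-frame statement, which also explains why no detectability or stabilizability hypotheses appear.

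First I would propagate the known prior $\vo{\Sigma}_k^-$ one step forward. Composing the covariance-update equation at frame $k$ with the covariance-propagation equation to frame $k+1$, and using the standard identity
\[
(\I_{n_x}-\K_k\H)\vo{\Sigma}_k^- = \vo{\Sigma}_k^- - \vo{\Sigma}_k^-\H^T\big(\H\vo{\Sigma}_k^-\H^T+\vo{R}\big)^{-1}\H\vo{\Sigma}_k^-,
\]
with total measurement-noise covariance $\vo{R}=\vo{R}_{\text{s}}+\vo{R}_{\text{p}}$ (intrinsic sensor noise $\vo{R}_{\text{s}}$ plus injected privacy noise $\vo{R}_{\text{p}}$), I obtain
\[
\vo{\Sigma}_{k+1}^- = \F\vo{\Sigma}_k^-\F^T + \Q - \F\vo{\Sigma}_k^-\H^T\big(\H\vo{\Sigma}_k^-\H^T+\vo{R}_{\text{s}}+\vo{R}_{\text{p}}\big)^{-1}\H\vo{\Sigma}_k^-\F^T.
\]
Imposing the privacy requirement $\vo{\Sigma}_{k+1}^-\succeq\vo{\Sigma}^d_{k+1}$ and introducing $\vo{L}_1=\F\vo{\Sigma}_k^-\H^T$, $\vo{L}_2=\H\vo{\Sigma}_k^-\H^T+\vo{R}_{\text{s}}$, and $\vo{M}_{11}=-\vo{\Sigma}^d_{k+1}+\F\vo{\Sigma}_k^-\F^T+\Q$, this is exactly
\[
\vo{M}_{11} - \vo{L}_1\big(\vo{L}_2+\vo{R}_{\text{p}}\big)^{-1}\vo{L}_1^T \succeq 0.
\]

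Next, since $\vo{\Sigma}_k^-\succeq 0$ and $\vo{R}_{\text{s}}\succ 0$ (so that $\vo{L}_2+\vo{R}_{\text{p}}\succ 0$ for every $\vo{R}_{\text{p}}\succeq 0$), the Schur-complement lemma turns the last inequality into the LMI of \eqn{thm3}. The inverse having been eliminated, this LMI is affine in $\vo{R}_{\text{p}}$; together with the linear objective $\trace{\vo{W}\vo{R}_{\text{p}}\vo{W}^T}$ this is a semidefinite program, hence convex, and its minimizer $\vo{R}_{\text{p}}^\ast$ is the least privacy-injection covariance that enforces $\vo{\Sigma}_{k+1}^-\succeq\vo{\Sigma}^d_{k+1}$.

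The computation is routine; the two points I would take care with are the sign bookkeeping when $\vo{\Sigma}^d_{k+1}$ is moved to the left-hand side (so that $\vo{M}_{11}$ has precisely the stated form), and the positive-definiteness of $\vo{L}_2+\vo{R}_{\text{p}}$ required for the Schur complement — if $\vo{R}_{\text{s}}$ is permitted to be merely positive semidefinite, one must instead invoke the generalized Schur complement together with a range condition $\operatorname{range}(\vo{L}_1^T)\subseteq\operatorname{range}(\vo{L}_2+\vo{R}_{\text{p}})$. I would also flag, for the reader, that the theorem constrains the \emph{predicted} covariance $\vo{\Sigma}_{k+1}^-$ (a function of the filter noise used at frame $k$), which is the quantity that the injected $\vo{R}_{\text{p}}$ actually controls.
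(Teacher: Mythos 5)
Your proposal is correct and follows essentially the same route as the paper: write the one-step Riccati recursion for the predicted covariance with total measurement noise $\vo{R}_{\text{s}}+\vo{R}_{\text{p}}$, impose $\vo{\Sigma}_{k+1}^{-}\succeq\vo{\Sigma}^d_{k+1}$, and convert the resulting inequality into the LMI of \eqn{thm3} via a Schur complement, then minimize $\trace{\vo{W}\vo{R}_{\text{p}}\vo{W}^T}$. Your added remarks on the invertibility of $\vo{L}_2+\vo{R}_{\text{p}}$ (and the generalized Schur complement when $\vo{R}_{\text{s}}$ is only positive semidefinite) are a welcome refinement the paper glosses over, but they do not change the argument.
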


\begin{proof}

The Riccati equation for predicted covariance is
\begin{align*}
\vo{\Sigma}^{-}_{k+1} &= \A\vo{\Sigma}^{-}_{k}\A^T +\Q\\
&- \A\vo{\Sigma}^{-}_{k}\H^T(\H\vo{\Sigma}^{-}_{k}\H^T+\underbrace{\R_{s}+\R_{p}}_{\R})^{-1}\H\vo{\Sigma}^{-}_{k}\F^T
\end{align*}
where the measurement noise consists of inherent noise ($\R_{s}$) due to the object acquisition setup which is assumed to be known and the noise ($\R_{p}$) which needs to be added to ensure $ \vo{\Sigma}_{k+1}^{-} \succeq \vo{\Sigma}^d_{k+1}$. Here $\R_{p}$ is the design variable.

The $\R_{p}$ that ensures lower bound on $ \vo{\Sigma}_{k+1}^{-}$ satisfies
\begin{align*}
\vo{\Sigma}^{d}_{k+1} &\preceq \A\vo{\Sigma}^{-}_{k}\A^T +\Q\\
&- \A\vo{\Sigma}^{-}_{k}\H^T(\H\vo{\Sigma}^{-}_{k}\H^T+\R_{s}+\R_{p})^{-1}\H\vo{\Sigma}^{-}_{k}\F^T
\end{align*}
Using Schur complement we get the following linear matrix inequality,
\begin{align}
\begin{bmatrix}
\vo{M}_{11} & \vo{L} \\
\vo{L}^{T} & \vo{L}_2+\vo{R}_{\text{p}}
\end{bmatrix} \succeq 0, 
\end{align} 
where 
\begin{align*}
\vo{L}_1 &:=\F\vo{\Sigma}^{-}_{k}\H^T, \ \vo{L}_2 := \vo{H}\vo{\Sigma}^{-}_{k}\vo{H}^T+\vo{R}_{\text{s}}\text{ and}\\
\vo{M}_{11} &:= -\vo{\Sigma}^d_{k+1} + \F\vo{\Sigma}^{-}_{k}\F^T  + \Q,
\end{align*}
The optimal $\vo{R}^\ast_p$ is achieved by minimizing $\trace{\vo{W}\vo{R}_p\vo{W}^T}$.
\end{proof}
\begin{remarks}
The LMI in eqn.~\ref{eqn:thm3} gives the convex feasible set for $\vo{R}_p$ that ensures lower bound on the posterior covariance in the ${k+1}^{\text{th}}$ frame. We impose a cost convex cost function $\trace{\vo{W}\vo{R}_p\vo{W}^T}$ to calculate an optimal $\vo{R}_p$. 
\end{remarks}
\section{Numerical Results}
We assume a simplified motion model for the moving red object from one frame to another in a video, which is shown in Fig.~\ref{fig:redball}. The dynamics in the pixel frame is
\begin{align}
\underbrace{\begin{bmatrix}
x_{k+1} \\ y_{k+1} \\ \delta x_{k+1} \\ \delta y_{k+1}
\end{bmatrix}}_{\x_{k+1}^p} &=
\underbrace{\begin{bmatrix}
 1 & 0 & 1 & 0\\ 
 0 & 1 & 0 & 1\\
 0 & 0 & 1 & 0\\
 0 & 0 & 0 & 1
\end{bmatrix}}_{\F}
\underbrace{\begin{bmatrix}
x_{k} \\ y_{k} \\ \delta x_{k} \\ \delta y_{k}
\end{bmatrix}}_{\x_k^p}+\w_k,\label{eqn:pixeldyn}\\
\vo{y}_k &=
\underbrace{\begin{bmatrix}
1 & 0 & 0 & 0\\
0 & 1 & 0 & 0
\end{bmatrix}}_{\H}
\begin{bmatrix}
x_{k} \\ y_{k} \\ \delta x_{k} \\ \delta y_{k}
\end{bmatrix} + \n_k,\label{eqn:pixelmeas}
\end{align}
where $\x_{k}^p$ is the pixel coordinates of the moving object in the $k^{\text{th}}$ frame, $\mathbb{E}(\w_k,\w_l)=\delta_{kl}\vo{Q}$, and $\mathbb{E}(\n_k,\n_l)=\delta_{kl}\vo{R}$. The video is generated synthetically. There are a total of 500 frames in this video with 425 rows and 570 columns in each frame. The pair $(\F,\H)$ is completely detectable and $(\F,\Q^{1/2})$ is completely stabilizable, which ensures existence and uniqueness of positive solution to the induced DARE due to Kalman filtering of this system. The variable $\vo{R}$ is our design parameter. 

A homography exists between the pixel coordinates ($\x^p$) and the spatial coordinates ($\x$). The homography in this numerical problem is represented as an affine map
\begin{align*}
\x^{\text{p}} = 
\underbrace{\begin{bmatrix}
0 & \frac{n_r}{4}\\
-\frac{n_c}{4} & 0
\end{bmatrix}}_{\vo{U}}
\x + \begin{bmatrix}
\frac{n_r}{2} \\
\frac{n_c}{2}
\end{bmatrix}.
\end{align*}
The affine map induces a covariance relation $\vo{\Sigma}_{\x^{\text{p}}\x^{\text{p}}}= \vo{U}\vo{\Sigma}_{\x\x}\vo{U}^T$ from the pixel to the spatial coordinates.
\subsection{Utility results}
The optimal utility of an object detection setup, which includes the image acquisition hardware and the image processing algorithm,  can be prescribed as maximum error covariance allowed in the spatial coordinate frame ($\vo{\Sigma}_{\x\x}\preceq \vo{\Sigma}_{\x\x}^{\text{max}}$) due to filtering on the observed data. For instance, suppose we are tracking a car. We expect the tracking accuracy to be less than 
$\textbf{diag}([L_{\text{car}}^2 \ L_{\text{car}}^2])$, where $L_{\text{car}}$ denotes the length of the car. This is important from a situational awareness perspective in a traffic system. Using the induced covariance relation $\vo{\Sigma}_{\x^{\text{p}}\x^{\text{p}}}= \vo{U}\vo{\Sigma}_{\x\x}\vo{U}^T$, we transform the utility requirement into pixel coordinate system. The theoretical lower bound on utility in the pixel coordinate system for $\Q = \textbf{diag}([0.1 \ 0.1 \ 50 \ 50])$ is
\begin{align*}
\vo{\Sigma}_{\x^{\text{p}}\x^{\text{p}}}^{\text{lb}} =\textbf{diag}([54.891 \ 54.891]),   
\end{align*}
which can be solved using the \texttt{idare()} function in MATLAB \cite{MATLAB:2017}. This lower bound translates to a lower bound of 
\begin{align*}
\vo{\Sigma}_{\x\x}^{\text{lb}} =\textbf{diag}([2.693e-3 \ 4.845e-3]) \text{m}^2,   
\end{align*}
in the spatial coordinate system. If we allow for less precise filtering in pixel coordinates which can ensure a error covariance in the estimate of $1.5\vo{\Sigma}_{\x\x}^{\text{lb}}$, the convex optimization problem yields an optimal precision requirement of
$$\vo{\Upsilon}^* = \textbf{diag}([0.660 \ 0.660]),$$ with $\W$ chosen to be identity. To solve this we used CVX, a package for specifying and solving convex programs \cite{cvx},\cite{gb08}. We used SDPT3 solver \cite{T_t_nc__2003} which took a CPU time of 0.95 secs to solve the problem in CVX.

The calculated $\vo{R}^*:={\vo{\Upsilon}^*}^{-1}$ denotes that the intensity of the measurement noise (modeled as zero mean Gaussian) that gets added to the actual measurement due to the hardware and the object detection algorithm, needs to be less than $1.5$ (pixel length)$^2$. This will ensure that the estimation error always remains below the prescribed threshold of $1.5\vo{\Sigma}_{\x\x}^{\text{lb}}$. One can relate this precision requirement to different aspects of the detection process. For instance, the value of the precision is proportional to the resolution of the camera used. Higher resolution denotes higher precision. The matrix $\vo{W}$ used in the cost function can be interpreted as a price per unit resolution. With proper choice of $\vo{W}$ we can calculate the most economical sensing system that satisfies our requirement. Using a precision of $\vo{\Upsilon}^*= \textbf{diag}([0.660 \  0.660])$ we calculate the RMSE for 500 Monte-Carlo (MC) runs with randomized initial conditions which is shown in Fig.~\ref{fig:rmse1}. The peaks in the plot is due to the fact that we assumed a linear motion model whereas Fig.~\ref{fig:redball} shows that the motion no longer remains linear at places where there is considerable change in the direction. 
\begin{figure}
\centerline{\includegraphics[width=0.4\textwidth]{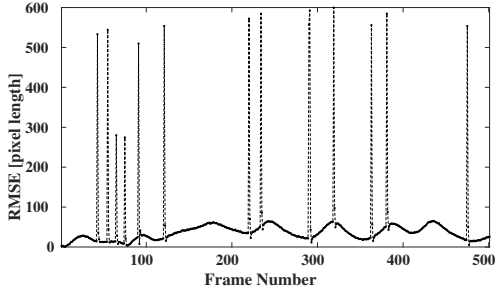}}
\caption{RMSE with 500 MC runs}
\label{fig:rmse1}
\end{figure}
\begin{figure}
\centerline{\includegraphics[width=0.5\textwidth]{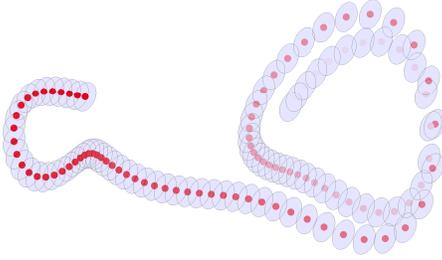}}
\caption{Error covariance averaged over 500 MC runs}
\label{fig:errorcov}
\end{figure}
In Fig.~\ref{fig:errorcov} we see the evolution of the error covariance matrix $\vo{\Sigma}_{\x\x}$ with different frames, averaged over 500 MC runs. 
In the steady state this covariance is guaranteed to remain below the prescribed $1.5\vo{\Sigma}_{\x\x}^{\text{lb}}$. 
\subsection{Privacy results}
In the system defined in eqn.~\ref{eqn:pixeldyn} and eqn.~\ref{eqn:pixelmeas} we assume that the measurement model has inherent sensor and/or object detection zero mean Gaussian noise ($\vo{n}_s$). We add a synthetic zero mean Gaussian noise ($\vo{n}_p$) to the image to ensure privacy. The noise intensity $\mathbb{E}[\vo{n}_s\vo{n}_s^T]=\vo{R}_s$ is known and $\mathbb{E}[\vo{n}_p\vo{n}_p^T]=\vo{R}_p$ is our design parameter.

\tikzset{every picture/.style={line width=0.75pt}} 
\begin{figure}
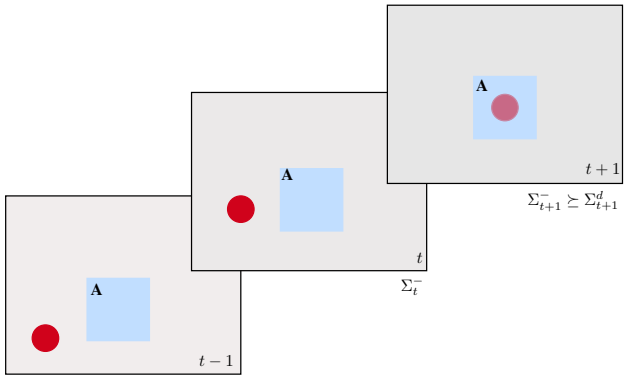

\begin{resizedtikzpicture}{0.5\textwidth}[x=0.75pt,y=0.75pt,yscale=-1,xscale=1] 
\draw  [fill={rgb, 255:red, 241; green, 237; blue, 237 }  ,fill opacity=1 ] (81,155) -- (269.5,155) -- (269.5,298) -- (81,298) -- cycle ;
\draw  [fill={rgb, 255:red, 235; green, 233; blue, 233 }  ,fill opacity=1 ] (230,72) -- (418.5,72) -- (418.5,215) -- (230,215) -- cycle ;
\draw  [fill={rgb, 255:red, 230; green, 230; blue, 230 }  ,fill opacity=1 ] (387,2) -- (575.5,2) -- (575.5,145) -- (387,145) -- cycle ;
\draw  [color={rgb, 255:red, 208; green, 2; blue, 27 }  ,draw opacity=1 ][fill={rgb, 255:red, 208; green, 2; blue, 27 }  ,fill opacity=1 ] (102.25,269.13) .. controls (102.25,263.26) and (107.01,258.5) .. (112.88,258.5) .. controls (118.74,258.5) and (123.5,263.26) .. (123.5,269.13) .. controls (123.5,274.99) and (118.74,279.75) .. (112.88,279.75) .. controls (107.01,279.75) and (102.25,274.99) .. (102.25,269.13) -- cycle ;
\draw  [color={rgb, 255:red, 208; green, 2; blue, 27 }  ,draw opacity=1 ][fill={rgb, 255:red, 208; green, 2; blue, 27 }  ,fill opacity=1 ] (258.88,165.63) .. controls (258.88,159.76) and (263.63,155) .. (269.5,155) .. controls (275.37,155) and (280.13,159.76) .. (280.13,165.63) .. controls (280.13,171.49) and (275.37,176.25) .. (269.5,176.25) .. controls (263.63,176.25) and (258.88,171.49) .. (258.88,165.63) -- cycle ;
\draw  [fill={rgb, 255:red, 208; green, 2; blue, 27 }  ,fill opacity=1 ] (470.63,84.13) .. controls (470.63,78.26) and (475.38,73.5) .. (481.25,73.5) .. controls (487.12,73.5) and (491.88,78.26) .. (491.88,84.13) .. controls (491.88,89.99) and (487.12,94.75) .. (481.25,94.75) .. controls (475.38,94.75) and (470.63,89.99) .. (470.63,84.13) -- cycle ;
\draw  [color={rgb, 255:red, 193; green, 222; blue, 255 }  ,draw opacity=1 ][fill={rgb, 255:red, 193; green, 222; blue, 255 }  ,fill opacity=1 ] (301.25,133.13) -- (351.25,133.13) -- (351.25,183.13) -- (301.25,183.13) -- cycle ;
\draw  [color={rgb, 255:red, 193; green, 222; blue, 255 }  ,draw opacity=1 ][fill={rgb, 255:red, 193; green, 222; blue, 255 }  ,fill opacity=1 ] (146.25,221.13) -- (196.25,221.13) -- (196.25,271.13) -- (146.25,271.13) -- cycle ;
\draw  [color={rgb, 255:red, 193; green, 222; blue, 255 }  ,draw opacity=1 ][fill={rgb, 255:red, 193; green, 222; blue, 255 }  ,fill opacity=1 ] (456.25,59.13) -- (506.25,59.13) -- (506.25,109.13) -- (456.25,109.13) -- cycle ;
\draw  [color={rgb, 255:red, 208; green, 2; blue, 27 }  ,draw opacity=0.4 ][fill={rgb, 255:red, 208; green, 2; blue, 27 }  ,fill opacity=0.53 ] (470.63,84.13) .. controls (470.63,78.26) and (475.38,73.5) .. (481.25,73.5) .. controls (487.12,73.5) and (491.88,78.26) .. (491.88,84.13) .. controls (491.88,89.99) and (487.12,94.75) .. (481.25,94.75) .. controls (475.38,94.75) and (470.63,89.99) .. (470.63,84.13) -- cycle ;

\draw (250,288) node    {$t-1$};
\draw (413,205) node    {$t$};
\draw (560,134) node    {$t+1$};
\draw (407,227) node    {$\Sigma ^{-}_{t}$};
\draw (536,159) node    {$\Sigma ^{-}_{t+1} \succeq \Sigma ^{d}_{t+1}$};
\draw (154,230) node   [align=left] {\textbf{A}};
\draw (307,138) node   [align=left] {\textbf{A}};
\draw (463,66) node   [align=left] {\textbf{A}};

\end{resizedtikzpicture}
\caption{Image frames with privacy in the region \textbf{A}}
\label{fig:threeframes}
\end{figure}
In Fig.~\ref{fig:threeframes} we see consecutive three frames with a smaller region inside them marked as \textbf{A}. These frames span the discrete time points $\{t,t-1,t+1\}$ as shown in the figure. When the tracked red object is in \textbf{A} in the $t+1^{\text{th}}$ frame, we want the location estimation error $\vo{\Sigma}^{-}_{t+1}$  to be greater than prescribed $\vo{\Sigma}^{d}_{t+1}$. We choose $\vo{\Sigma}^{d}_{t+1}$ to be $\textbf{diag}([2.703e-03 \ 4.862e-03])$m$^{2}$, in the spatial coordinates, which translates to $\textbf{diag}([54.891  \ 54.891])$ in the pixel frame.  Starting with an initial prior covariance $\vo{\Sigma}^{-}_{t}$, our proposed privacy theorem yields $$\vo{R}_p= \textbf{I}_2,$$ with $\vo{W}$ chosen to be identity. We assumed that the object acquisition and detection setup adds no noise the measurement, i.e. $\vo{R}_s = \textbf{0}$.
From a data sharing perspective, we would share  the image frame at time point $t+1$
 with added noise of intensity $\vo{R}_p$. 
Our privacy preserving framework is explained in Fig.~\ref{fig:frames}.
 
 To solve for $\vo{R}_p$ we again used CVX. We used SDPT3 solver which took a CPU time of 0.44 secs to solve the problem in CVX. The reduction in CPU time for the privacy problem compared to the utility problem is due to the fact that there is no inverse operation in the LMI. 
 
\begin{remarks}
We see in Fig.~\ref{fig:threeframes} that the red object which is being tracked using a Kalman filter, can still be identified in the $t+1$ frame, but cannot be precisely tracked beyond a certain accuracy.
\end{remarks} 

\tikzset{every picture/.style={line width=0.75pt}} 
\begin{figure}
\begin{resizedtikzpicture}{0.5\textwidth}[x=0.75pt,y=0.75pt,yscale=-1,xscale=1]

\draw  [fill={rgb, 255:red, 241; green, 237; blue, 237 }  ,fill opacity=1 ] (3,2) -- (94.5,2) -- (94.5,77) -- (3,77) -- cycle ;
\draw  [color={rgb, 255:red, 208; green, 2; blue, 27 }  ,draw opacity=1 ][fill={rgb, 255:red, 208; green, 2; blue, 27 }  ,fill opacity=1 ] (43.59,39.5) .. controls (43.59,36.42) and (45.9,33.93) .. (48.75,33.93) .. controls (51.6,33.93) and (53.91,36.42) .. (53.91,39.5) .. controls (53.91,42.58) and (51.6,45.07) .. (48.75,45.07) .. controls (45.9,45.07) and (43.59,42.58) .. (43.59,39.5) -- cycle ;
\draw  [fill={rgb, 255:red, 155; green, 155; blue, 155 }  ,fill opacity=1 ] (144,23) -- (376.5,23) -- (376.5,63) -- (144,63) -- cycle ;
\draw    (93.5,40) -- (141.5,40.96) ;
\draw [shift={(143.5,41)}, rotate = 181.15] [color={rgb, 255:red, 0; green, 0; blue, 0 }  ][line width=0.75]    (10.93,-3.29) .. controls (6.95,-1.4) and (3.31,-0.3) .. (0,0) .. controls (3.31,0.3) and (6.95,1.4) .. (10.93,3.29)   ;
\draw (292,229) node  {\includegraphics[width=52.5pt,height=52.5pt]{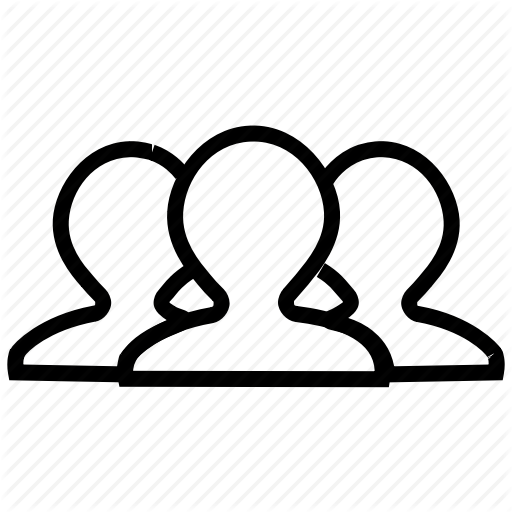}};
\draw  [fill={rgb, 255:red, 155; green, 155; blue, 155 }  ,fill opacity=1 ] (280,132) .. controls (280,124.27) and (286.27,118) .. (294,118) .. controls (301.73,118) and (308,124.27) .. (308,132) .. controls (308,139.73) and (301.73,146) .. (294,146) .. controls (286.27,146) and (280,139.73) .. (280,132) -- cycle ;
\draw    (294.5,64) -- (294.02,116) ;
\draw [shift={(294,118)}, rotate = 270.53] [color={rgb, 255:red, 0; green, 0; blue, 0 }  ][line width=0.75]    (10.93,-3.29) .. controls (6.95,-1.4) and (3.31,-0.3) .. (0,0) .. controls (3.31,0.3) and (6.95,1.4) .. (10.93,3.29)   ;
\draw    (294,146) -- (293.52,198) ;
\draw [shift={(293.5,200)}, rotate = 270.53] [color={rgb, 255:red, 0; green, 0; blue, 0 }  ][line width=0.75]    (10.93,-3.29) .. controls (6.95,-1.4) and (3.31,-0.3) .. (0,0) .. controls (3.31,0.3) and (6.95,1.4) .. (10.93,3.29)   ;
\draw  [fill={rgb, 255:red, 155; green, 155; blue, 155 }  ,fill opacity=1 ] (377,111) -- (447,111) -- (447,151) -- (377,151) -- cycle ;
\draw    (375.5,133) -- (312,132.03) ;
\draw [shift={(310,132)}, rotate = 360.87] [color={rgb, 255:red, 0; green, 0; blue, 0 }  ][line width=0.75]    (10.93,-3.29) .. controls (6.95,-1.4) and (3.31,-0.3) .. (0,0) .. controls (3.31,0.3) and (6.95,1.4) .. (10.93,3.29)   ;
\draw  [fill={rgb, 255:red, 241; green, 237; blue, 237 }  ,fill opacity=1 ] (426,44.56) -- (472.42,44.56) -- (472.42,75) -- (426,75) -- cycle ;
\draw  [fill={rgb, 255:red, 235; green, 233; blue, 233 }  ,fill opacity=1 ] (462.69,26.9) -- (509.11,26.9) -- (509.11,57.33) -- (462.69,57.33) -- cycle ;
\draw  [fill={rgb, 255:red, 230; green, 230; blue, 230 }  ,fill opacity=1 ] (501.36,12) -- (547.78,12) -- (547.78,42.44) -- (501.36,42.44) -- cycle ;
\draw  [color={rgb, 255:red, 208; green, 2; blue, 27 }  ,draw opacity=1 ][fill={rgb, 255:red, 208; green, 2; blue, 27 }  ,fill opacity=1 ] (431.23,68.85) .. controls (431.23,67.61) and (432.4,66.59) .. (433.85,66.59) .. controls (435.29,66.59) and (436.47,67.61) .. (436.47,68.85) .. controls (436.47,70.1) and (435.29,71.12) .. (433.85,71.12) .. controls (432.4,71.12) and (431.23,70.1) .. (431.23,68.85) -- cycle ;
\draw  [color={rgb, 255:red, 208; green, 2; blue, 27 }  ,draw opacity=1 ][fill={rgb, 255:red, 208; green, 2; blue, 27 }  ,fill opacity=1 ] (469.8,46.83) .. controls (469.8,45.58) and (470.98,44.56) .. (472.42,44.56) .. controls (473.87,44.56) and (475.04,45.58) .. (475.04,46.83) .. controls (475.04,48.07) and (473.87,49.09) .. (472.42,49.09) .. controls (470.98,49.09) and (469.8,48.07) .. (469.8,46.83) -- cycle ;
\draw  [fill={rgb, 255:red, 208; green, 2; blue, 27 }  ,fill opacity=1 ] (521.95,29.48) .. controls (521.95,28.23) and (523.12,27.22) .. (524.57,27.22) .. controls (526.01,27.22) and (527.18,28.23) .. (527.18,29.48) .. controls (527.18,30.73) and (526.01,31.74) .. (524.57,31.74) .. controls (523.12,31.74) and (521.95,30.73) .. (521.95,29.48) -- cycle ;
\draw  [color={rgb, 255:red, 193; green, 222; blue, 255 }  ,draw opacity=1 ][fill={rgb, 255:red, 193; green, 222; blue, 255 }  ,fill opacity=1 ] (480.24,39.91) -- (492.55,39.91) -- (492.55,50.55) -- (480.24,50.55) -- cycle ;
\draw  [color={rgb, 255:red, 193; green, 222; blue, 255 }  ,draw opacity=1 ][fill={rgb, 255:red, 193; green, 222; blue, 255 }  ,fill opacity=1 ] (442.07,58.64) -- (454.38,58.64) -- (454.38,69.28) -- (442.07,69.28) -- cycle ;
\draw  [color={rgb, 255:red, 193; green, 222; blue, 255 }  ,draw opacity=1 ][fill={rgb, 255:red, 193; green, 222; blue, 255 }  ,fill opacity=1 ] (518.41,24.16) -- (530.72,24.16) -- (530.72,34.8) -- (518.41,34.8) -- cycle ;
\draw  [color={rgb, 255:red, 208; green, 2; blue, 27 }  ,draw opacity=1 ][fill={rgb, 255:red, 208; green, 2; blue, 27 }  ,fill opacity=1 ] (521.95,29.48) .. controls (521.95,28.23) and (523.12,27.22) .. (524.57,27.22) .. controls (526.01,27.22) and (527.18,28.23) .. (527.18,29.48) .. controls (527.18,30.73) and (526.01,31.74) .. (524.57,31.74) .. controls (523.12,31.74) and (521.95,30.73) .. (521.95,29.48) -- cycle ;
\draw  [fill={rgb, 255:red, 241; green, 237; blue, 237 }  ,fill opacity=1 ] (392,227.56) -- (436.54,227.56) -- (436.54,258) -- (392,258) -- cycle ;
\draw  [fill={rgb, 255:red, 235; green, 233; blue, 233 }  ,fill opacity=1 ] (427.21,209.9) -- (471.75,209.9) -- (471.75,240.33) -- (427.21,240.33) -- cycle ;
\draw  [fill={rgb, 255:red, 230; green, 230; blue, 230 }  ,fill opacity=1 ] (464.31,195) -- (508.85,195) -- (508.85,225.44) -- (464.31,225.44) -- cycle ;
\draw  [color={rgb, 255:red, 208; green, 2; blue, 27 }  ,draw opacity=1 ][fill={rgb, 255:red, 208; green, 2; blue, 27 }  ,fill opacity=1 ] (397.02,251.85) .. controls (397.02,250.61) and (398.15,249.59) .. (399.53,249.59) .. controls (400.92,249.59) and (402.04,250.61) .. (402.04,251.85) .. controls (402.04,253.1) and (400.92,254.12) .. (399.53,254.12) .. controls (398.15,254.12) and (397.02,253.1) .. (397.02,251.85) -- cycle ;
\draw  [color={rgb, 255:red, 208; green, 2; blue, 27 }  ,draw opacity=1 ][fill={rgb, 255:red, 208; green, 2; blue, 27 }  ,fill opacity=1 ] (434.03,229.83) .. controls (434.03,228.58) and (435.15,227.56) .. (436.54,227.56) .. controls (437.93,227.56) and (439.05,228.58) .. (439.05,229.83) .. controls (439.05,231.07) and (437.93,232.09) .. (436.54,232.09) .. controls (435.15,232.09) and (434.03,231.07) .. (434.03,229.83) -- cycle ;
\draw  [fill={rgb, 255:red, 208; green, 2; blue, 27 }  ,fill opacity=1 ] (484.06,212.48) .. controls (484.06,211.23) and (485.19,210.22) .. (486.58,210.22) .. controls (487.96,210.22) and (489.09,211.23) .. (489.09,212.48) .. controls (489.09,213.73) and (487.96,214.74) .. (486.58,214.74) .. controls (485.19,214.74) and (484.06,213.73) .. (484.06,212.48) -- cycle ;
\draw  [color={rgb, 255:red, 193; green, 222; blue, 255 }  ,draw opacity=1 ][fill={rgb, 255:red, 193; green, 222; blue, 255 }  ,fill opacity=1 ] (444.04,222.91) -- (455.86,222.91) -- (455.86,233.55) -- (444.04,233.55) -- cycle ;
\draw  [color={rgb, 255:red, 193; green, 222; blue, 255 }  ,draw opacity=1 ][fill={rgb, 255:red, 193; green, 222; blue, 255 }  ,fill opacity=1 ] (407.42,241.64) -- (419.23,241.64) -- (419.23,252.28) -- (407.42,252.28) -- cycle ;
\draw  [color={rgb, 255:red, 193; green, 222; blue, 255 }  ,draw opacity=1 ][fill={rgb, 255:red, 193; green, 222; blue, 255 }  ,fill opacity=1 ] (480.67,207.16) -- (492.48,207.16) -- (492.48,217.8) -- (480.67,217.8) -- cycle ;
\draw  [color={rgb, 255:red, 208; green, 2; blue, 27 }  ,draw opacity=0.16 ][fill={rgb, 255:red, 208; green, 2; blue, 27 }  ,fill opacity=0.37 ] (484.06,212.48) .. controls (484.06,211.23) and (485.19,210.22) .. (486.58,210.22) .. controls (487.96,210.22) and (489.09,211.23) .. (489.09,212.48) .. controls (489.09,213.73) and (487.96,214.74) .. (486.58,214.74) .. controls (485.19,214.74) and (484.06,213.73) .. (484.06,212.48) -- cycle ;
\draw    (413,153) -- (320.2,209.95) ;
\draw [shift={(318.5,211)}, rotate = 328.46000000000004] [color={rgb, 255:red, 0; green, 0; blue, 0 }  ][line width=0.75]    (10.93,-3.29) .. controls (6.95,-1.4) and (3.31,-0.3) .. (0,0) .. controls (3.31,0.3) and (6.95,1.4) .. (10.93,3.29)   ;

\draw (259,43) node   [align=left] {Camera and Object Acquisition};
\draw (412,131) node   [align=left] {Noise};
\draw (294,132) node   [align=left] {+};
\draw (443.98,60.53) node   [align=left] {\textbf{A}};
\draw (481.66,40.95) node   [align=left] {\textbf{A}};
\draw (520.07,25.62) node   [align=left] {\textbf{A}};
\draw (409.25,243.53) node   [align=left] {\textbf{A}};
\draw (445.4,223.95) node   [align=left] {\textbf{A}};
\draw (480.67,207.16) node   [align=left] {\textbf{A}};
\draw (359,171) node    {$R_{p}$};
\draw (295,263) node   [align=left] {End User};

\end{resizedtikzpicture}
\caption{Privacy ensuring mechanism}
\label{fig:frames}
\end{figure}
\section{Conclusion}
In this work we addressed two questions related to privacy and utility for moving object detection from a video stream using the Kalman filter. We modeled them as convex optimization problems based on LMIs. The proposed framework was implemented on a numerical problem for two scenarios. First, the purpose was to track an object with an upper bound on estimation error while ensuring utility. Second, we calculated the minimal noise that needs to be injected to a frame to ensure desired privacy prescribed by a lower bound on the localization error of the object.  

\section{Acknowledgment}
We are thankful to the reviewers whose valuable feedback helped in improving our work.

\bibliography{conference_101719}
\bibliographystyle{IEEEtran}

\end{document}